\theoremstyle{plain}
\newtheorem{theorem}{Theorem}[section]
\newtheorem*{theorem*}{Theorem}
\newtheorem{lemma}[theorem]{Lemma}
\theoremstyle{definition}
\theoremstyle{definition}
\newtheorem{remark}{Remark}
\newcommand{\A}{\alpha}
\newcommand{\B}{\beta}
\newcommand{\N}{\mathbb{N}}
\newcommand{\R}{\mathbb{R}}
\newcommand{\g}{\gamma}
\newcommand{\G}{\Gamma}
\newcommand{\e}{\epsilon}
\renewcommand{\H}{{\mathbb{H}}}
\renewcommand{\t}{\tau}
\newcommand{\la}{\langle}
\newcommand{\ra}{\rangle}
\theoremstyle{plain} 
\newcommand{\thistheoremname}{}
\newtheorem*{genericthm*}{\thistheoremname}
\newenvironment{style}[1]
  {\renewcommand{\thistheoremname}{#1}%
   \begin{genericthm*}}
  {\end{genericthm*}}
\begin{document}
\title[Goldman bracket \& length equivalent filling curves]{Goldman bracket \& length equivalent filling curves}

\author{Arpan Kabiraj}

\address{Department of Mathematics\\ 
		Chennai Mathematical Institute\\
		Chennai 603103, India}

\email{akabiraj@cmi.ac.in}

\thanks{This research is supported by NET-CSIR (India) Senior Research Fellowship.}

\begin{abstract}
A pair of  distinct free homotopy classes of closed curves in an orientable surface $F$ with negative Euler characteristic is said to be length equivalent if for any hyperbolic structure on $F$, the length of the geodesic representative of one  class is equal to the length of the geodesic representative of the other class. Suppose $\A$ and $\B$ are two intersecting oriented closed curves on $F$ and $P$ and $Q$ are any two intersection points between them. If the two terms $\la\A*_P\B\ra$ and $\la\A*_Q\B\ra$ in $[\la\A\ra,\la\B\ra]$, the Goldman bracket between them, are the same, then we construct infinitely many pairs of length equivalent curves in $F.$ These pairs correspond to the terms of the Goldman bracket between a power of $\A$ and $\B$. As a special case, our construction shows that given a self-intersecting geodesic $\A$ of $F$ and any self-intersection point $P$ of $\A$, we get a sequence of such pairs. Furthermore if $\A$ is a filling curve then these pairs are also filling.
\end{abstract}
\maketitle
\section{Introduction }
Let $F$ be an orientable surface with negative Euler characteristic. By \cite[Theorem 1.2]{Pr}, we can endow $F$ with a hyperbolic metric. By a \emph{hyperbolic surface} we mean an orientable surface of finite type with negative Euler characteristic and with a given hyperbolic metric.

Given a hyperbolic surface $F$, there is a bijective correspondence between the set of non-trivial free homotopy classes of oriented closed curves and the set of non-trivial conjugacy classes of $\pi_1(F)$ \cite[Theorem 2.6]{GC}. Also in every non-trivial, non-peripheral free homotopy class of a closed curve, there exists a unique geodesic \cite[Proposition 1.3]{Pr}. Henceforth we use free homotopy classes of oriented closed curves and conjugacy classes of elements in $\pi_1(F)$ interchangeably. Given any oriented closed curve $\B$, we denote both its free homotopy class in $F$ and its conjugacy class in $\pi_1(F)$ by $\la\B\ra$.

Two distinct conjugacy classes $\la\A\ra$  and $\la\B\ra$ in $\pi_1(F)$ are said to be \emph{length equivalent}  if for every hyperbolic metric in $F$, the length of the unique geodesic representative of the free homotopy class $\la\A\ra$ is equal to the length of the unique geodesic representative of the free homotopy class $\la\B\ra$ and $\la\B\ra\neq\la\A^{-1}\ra$. 

A geodesic $\A$ in $F$ is said to be  \emph{filling curve} if the component of the the complement of $\A$ in $F$ is homeomorphic to a disc or punctured disc or an annulus. Notice that being a filling curve is not homotopy invariant. We call a free homotopy class of a curve $\la\A\ra$ filling if the geodesic in the free homotopy class of $\la\A\ra$ is a filling curve.

Given two free homotopy classes of oriented closed curves $\la\alpha\ra$ and $\la\beta\ra$ in $F$, consider two oriented closed curves  $\A$ and $\B$ representing $\la\alpha\ra$ and $\la\beta\ra$ respectively. Performing a small homotopy if necessary, we assume that $\A$ and $\B$ intersect transversally in double points. Goldman \cite{Gol} defined the bracket of $\la\alpha \ra$ and $\la\beta \ra$ as the sum, $$[\la\alpha\ra,\la\beta\ra]=\sum_{P\in\A\cap\B}\e_P\la\A*_P\B\ra$$ where $\A\cap\B$ denotes the set of all intersection points between $\A$ and $\B$, $\e_P$ denotes the sign of the intersection between $\A$ and $\B$ at $P$ and $(\A*_P\B)$ denotes the loop product of $\A$ and $\B$ at $P$.

\begin{style}{Main Theorem}
Let $F$ be a hyperbolic surface and $\A$ and $\B$ be two closed oriented curves intersecting transversally in double points. Also assume that they intersect each other minimally in their free homotopy class. Let $P_1$ and $P_2$ be two intersection points between them such that $\la\A*_{P_1}\B\ra=\la\A*_{P_2}\B\ra$, i.e. the terms corresponding to $P_1$ and $P_2$ in $[\la\A\ra,\la\B\ra]$ are same. Then there exists $g,h\in\pi_1(F)$ and a positive integer integer $N$ such that for all $n>N$, the conjugacy classes of $\A^n\B^{g}$ and $\A^n \B^h$ are length equivalent. 
\end{style}
 
Let us consider the following special case. Let $\A$ be any self- intersecting geodesic and $P$ be any self-intersection point of $\A$. As the Goldman bracket is a Lie bracket, $[\la\A\ra,\la\A\ra]=0.$ The reason is the following. Let $\A_1$ and $\A_2$ be two curves freely homotopic to $\A$ which intersect each other transversally and $\A_1\cup\A_2$ cobounds a narrow annulus. Then corresponding to each self intersection point $P$ of $\A$, we get two intersection points $P_1$ and $P_2$ between $\A_1$ and $\A_2$ and $\la\A_1*_{P_1}\B\ra=\la\A_1*_{P_2}\B\ra$ with opposite signs.

\begin{figure}[h]
  \centering
    \includegraphics[trim = 80mm 40mm 80mm 40mm, clip, width=6cm]{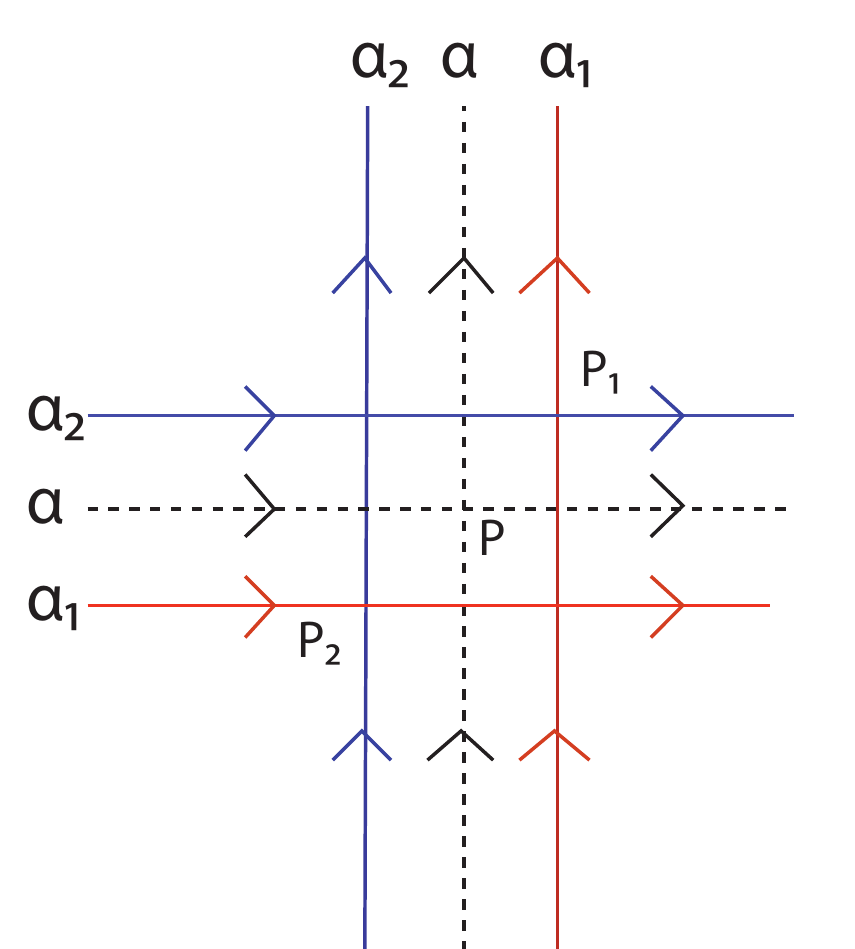}
     \caption{Corresponding to the self-intersection point $P$ of $\A$, there are two intersection points $P_1$ and $P_2$ between $\A_1$ and $\A_2$. The sign of the intersection between $\A_1$ and $\A_2$ at $P_1$ and $P_2$ respectively are opposite to each other.}\label{fattening}
\end{figure}

Now we are in the same situation as in the main theorem and we have the following result.

\begin{theorem}\label{mainlength}Let $F$ be any hyperbolic surface and $\A$ be any self-intersecting closed geodesic in $F$. Given any self intersection point $P$ of $\A$, there exists $g\in \pi_1(F)$ and a positive integer $N$ such that for all $n>N$, the conjugacy classes of $\A^n g\A g^{-1}$ and $g\A^n  g^{-1}\A$ are length equivalent.  Furthermore if $\A$ is a filling curve then $\A^n g\A g^{-1}$ and $g\A^n  g^{-1}\A$ are also filling curves.
\end{theorem}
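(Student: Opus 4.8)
The plan is to derive the length-equivalence assertion directly from the Main Theorem and to establish the filling statement by a separate geodesic-current argument. For the first part, I would realize the self-intersecting geodesic $\A$ by two transverse copies: set $\A_1=\A$ and let $\A_2$ be a curve freely homotopic to $\A$ so that $\A_1\cup\A_2$ cobounds a narrow annulus, as in the discussion preceding the theorem. Each self-intersection point $P$ of $\A$ then produces two intersection points $P_1,P_2$ of $\A_1$ with $\A_2$ carrying opposite signs and satisfying $\la\A_1*_{P_1}\A_2\ra=\la\A_1*_{P_2}\A_2\ra$. I would first check that $\A_1$ and $\A_2$ meet minimally in their free homotopy class (two parallel copies of a geodesic realize the minimal self-intersection number of the class), so that the hypotheses of the Main Theorem hold with $\B=\A_2$. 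The Main Theorem then yields $g,h\in\pi_1(F)$ and $N$ with $\la\A^n\B^{g}\ra$ and $\la\A^n\B^{h}\ra$ length equivalent for $n>N$. Since $\A_2$ is freely homotopic to $\A$, I would rewrite these classes with $\B$ replaced by $\A$ and read off from the explicit construction in the Main Theorem's proof that, in this symmetric self-intersection configuration, the pair is exactly $\la\A^n g\A g^{-1}\ra$ and $\la g\A^n g^{-1}\A\ra$ (the opposite signs at $P_1,P_2$ forcing the two conjugators to be mutually inverse).

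For the filling statement, write $c_n=\A^n g\A g^{-1}$; the class $g\A^n g^{-1}\A$ is conjugate to $\A^n(g^{-1}\A g)$, again $\A^n$ times a conjugate of $\A$, so the same argument covers it. I would use the characterization that, under the definition above, a geodesic $\delta$ fills $F$ if and only if $i(\delta,\eta)>0$ for every essential simple closed curve $\eta$: an essential $\eta$ disjoint from $\delta$ would lie in a single complementary region, but discs and once-punctured discs contain no essential curve, and a complementary annulus of a \emph{geodesic} is necessarily a collar of a boundary component or cusp (two disjoint simple closed geodesics cannot cobound an annulus), so its core is peripheral. The crux is then that $\tfrac1n[c_n]\to[\A]$ in the space of geodesic currents $\mathcal{C}(F)$, where $[\,\cdot\,]$ denotes the current of a closed geodesic and $[\A]$ fills by hypothesis. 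To obtain this, I would note that for every hyperbolic structure $X$ the translation length of $\A^n g\A g^{-1}$ is $n\,\ell_X(\A)+O(1)$, the factor $g\A g^{-1}$ contributing a bounded amount; hence $\tfrac1n\ell_X(c_n)\to\ell_X(\A)$ for all $X$. Since currents of bounded $X$-length form a compact set, the sequence $\tfrac1n[c_n]$ is precompact, and as a geodesic current is determined by its lengths over all $X$, every subsequential limit equals $[\A]$, so the whole sequence converges to $[\A]$.

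Finally I would invoke that the set of filling currents is open in $\mathcal{C}(F)$: geometric intersection number $i(\,\cdot\,,\,\cdot\,)$ is continuous, the projectivized space of currents has a compact cross-section, and on it $\nu\mapsto i([\A],\nu)$ is continuous and strictly positive, hence bounded below by some $m>0$; by continuity this positivity persists for all currents sufficiently near $[\A]$. Therefore $\tfrac1n[c_n]$ fills for all large $n$, and so does $c_n$. The main obstacle is precisely this last uniformity. A naive fellow-traveling argument shows that $c_n^{*}$ crosses any \emph{fixed} essential geodesic that $\A^{*}$ crosses, since for large $n$ the geodesic $c_n^{*}$ wraps $\epsilon$-closely around the whole of $\A^{*}$; but one must rule out, simultaneously for the infinitely many simple closed geodesics $\eta$, the possibility that $\eta$ slips through the gap between $c_n^{*}$ and $\A^{*}$. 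It is the compactness of projectivized currents that upgrades the pointwise positivity $i([\A],\eta)>0$ into the uniform lower bound needed to select a single $N$.
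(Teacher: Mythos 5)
Your reduction of the length--equivalence statement to the Main Theorem is workable in outline, though it inverts the paper's logic: the paper proves Theorem \ref{mainlength} first, via Lemma \ref{liftequivalence} (identifying $\la\G_1(P,n)\ra$ and $\la\G_2(P,n)\ra$ with the conjugacy classes of $\A^n\A^g$ and $(\A^g)^n\A$ for a specific $g$ with $gA_\A\cap A_\A=\{p\}$), Lemma \ref{le1} and Lemma \ref{le2}, and only afterwards proves the Main Theorem by the same technique. To land on the specific elements $\A^n g\A g^{-1}$ and $g\A^n g^{-1}\A$, rather than the unspecified $g,h$ of the Main Theorem, you still need that lift analysis, which you only gesture at; the precise statement is that $g\A^ng^{-1}\A$ is conjugate to $\A^n\A^{g^{-1}}$, so the two loop products at $P_1,P_2$ are $\A^n\A^{g}$ and $\A^n\A^{g^{-1}}$ up to conjugacy --- this comes from the geometry of the lifts, not from the signs being opposite.

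The filling argument has a genuine gap, located exactly where you put the weight. You assert that a geodesic current is determined by its lengths over all hyperbolic structures $X$, and use this to conclude that every subsequential limit of $\tfrac1n[c_n]$ equals $[\A]$. That assertion is false, and the counterexamples are the very objects this paper constructs: length equivalent curves are by definition distinct conjugacy classes, neither equal nor inverse to one another, hence distinct geodesic currents, whose length functions on Teichm\"uller space coincide identically. So $i(\mu,L_X)=\ell_X(\A)$ for all $X$ does not force $\mu=[\A]$. The separation theorem you would actually need (Otal) says a current is determined by its intersection numbers with all closed geodesics; to apply it you must prove $\tfrac1n\, i(c_n,\gamma)\to i(\A,\gamma)$ for every closed curve $\gamma$, and the lower bound there is precisely the nontrivial geometric content. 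The paper's Lemma \ref{filling} supplies exactly that kind of argument directly and without currents: working in $\H$, it produces for each simple closed geodesic $z$ with $i(z,\A)\neq 0$ a family of pairwise disjoint lifts $z_i$ crossing successive translates of the segments making up $\g_2(p,n)$, and shows by convexity of geodesic triangles and a counting bound that once $n>2i(z,\A)$ these lifts cannot all miss the axis of $(\A^g)^n\A$; note this also gives a single $N$ working for all $z$ since the threshold $2i(z,\A)$ only helps when $i(z,\A)>0$. Your currents framework would be a genuinely different and attractive route (the openness of filling currents does handle the uniformity over $\eta$ cleanly), but as written the convergence $\tfrac1n[c_n]\to[\A]$ rests on a false premise and the argument does not close.
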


\subsection*{Length equivalent curves}
Given any hyperbolic surface $F$, the \emph{length spectrum} of $F$ is the set of the lengths of all geodesics in $F$. It is well known that the length spectrum of a hyperbolic surface is discrete \cite[Theorem 1.6.11]{Bu}. Randol \cite{Ra}, using a result of Horowitz \cite{Ho}, proved that on every hyperbolic surface, the length spectrum has  arbitrarily high multiplicities. Buser \cite[Section 3.7]{Bu}, gave an algorithm to construct length equivalent curves in the pairs of pants and in torus with one boundary component. Chas \cite{Ch3} provided examples of length equivalent curves with different self-intersection number. For more details about length equivalent curves, see \cite{A}.

\subsection*{Goldman bracket }
William Goldman \cite{Gol}, discovered the Goldman bracket in the eighties and proved that if the Goldman bracket between two curves is zero and one of them is simple then they have disjoint representatives. Chas \cite{Ch},\cite{Ch1}, proved a stronger version of Goldman's result namely if one of the curves has a simple representative then the number of terms in the Goldman bracket is the same as their geometric intersection number. Chas and Krongold \cite{CK}, proved that for a compact surface with non-empty boundary, the Lie bracket between an element and its cube determines the self-intersection number. Using hyperbolic geometry, Chas and Gadgil \cite{GC}, proved that there exists a positive integer $m$ such that for all $n>m$, the geometric intersection number between $x$ and $y$ is the number of terms in the Goldman bracket between $n$-th power of $x$ and $y$ divided by $n$. For more details about Goldman bracket, see \cite{GC}, \cite{Gol}.

\section*{Acknowledgements} The author would like to thank Siddhartha Gadgil for his encouragement and enlightening conversations. The author would also like to thank Basudeb Datta for his valuable suggestions and comments.      

\section{Notation and definition}

Let $F$ be an orientable surface of negative Euler characteristic, i.e. $F$ is an orientable surface of genus $g$ with $b$ boundary components and $n$ punctures such that $2-2g-b-n<0$. 

Fix a hyperbolic metric on $F$. We identify the fundamental group $\pi_1(F)$ of $F$ with a discrete subgroup of $PSL_2(\R)$, the group of orientation preserving isometries of the upper half plane $\H$. The action of $\pi_1(F)$ on $\H$ is properly discontinuous and without any fixed point and the quotient space is isometric to $F.$ Henceforth by an isometry of $\H$ we mean an orientation preserving isometry. Given $\A$ and $g$ in $\pi_1(F)$, we denote $g\A g^{-1}$ by $\A^g$ and the translation length of $\A$ by $\tau_\A$. If $\A$ is hyperbolic then $\A^g$ is also hyperbolic with $\t_{\A^{g}}=\t_\A$ and $A_{\A^{g}}=gA_\A$ for all $g\in \pi_1(F)$ where $A_\A$ denotes the axis of $\A$.   

By a \emph{lift} of a closed curve $\g$ to $\H$ we mean the image of a lift $\R\rightarrow \H$ of the map $\g\circ\pi$ where $\pi:\R\rightarrow S^1$ is the usual covering map.

Let $\la\A\ra$ be the free homotopy class of an oriented closed self-intersecting  geodesic $\A$. Corresponding to each self intersection point $P$ of $\A$ and each positive integer $n$, we construct two free homotopy classes of oriented closed curves in the following way.   

 Consider a narrow annular neighbourhood of $\A$. Let $\A_1$ (respectively $\A_2$) denote the boundary curve of the annulus at the right hand side (respectively left hand side) of $\A$ (Figure \ref{fattening}). The curves $\A_1$ and $\A_2$ intersect each other transversally and $\A_1\cup\A_2$ cobounds a narrow annulus. Therefore corresponding to each self intersection point $P$ of $\A$, we get two intersection points $P_1$ and $P_2$ between $\A_1$ and $\A_2$.

Let $\G_1(P,n)$ be the loop product of $\A_1^n$ and $\A_2$ at the point $P_1$ and $\G_2(P,n)$ be the loop product of $\A_1^n$ and $\A_2$ at the point $P_2$, where $\g^n$ denotes the curve which goes around the curves $\g$, $n$-times. Observe that the free homotopy class of $\g^n$ is well defined.

Let $\e_i$ denote the sign of the intersection between $\A_1$ and $\A_2$ at $P_i$ for $i=1,2$. From Figure \ref{fattening}, we get $$\e_1=-\e_2.$$  

 In the next section we construct lifts of a curve from the free homotopy class $\la\G_i(P,n)\ra$ to show that the free homotopy class $\la\G_i(P,n)\ra$ depends only on the geodesic $\A$, the self intersection point $P$ and the integer $n$ (Lemma \ref{liftequivalence}). 
 
We prove the following lemma which is the main lemma of this paper. Theorem \ref{mainlength} follows at once from this lemma.
 
\begin{lemma}\label{proxymainlemma}
There exists a positive integer $N$ such that for all $n>N$, the free homotopy classes $\la \G_1(P,n)\ra $ and $\la \G_2(P,n)\ra $ are length equivalent. Furthermore if $\A$ is a filling curve then $\la \G_1(P,n)\ra $ and $\la \G_2(P,n)\ra $ are also filling curves.
\end{lemma}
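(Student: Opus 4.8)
The plan is to reduce length equivalence to a single trace identity. Fix a hyperbolic structure, that is, a discrete faithful $\rho:\pi_1(F)\to PSL_2(\R)$, and recall that the length of the geodesic in a class $\la\g\ra$ equals the translation length $\t_\g$, which for a hyperbolic element is determined by $|\mathrm{tr}\,\rho(\g)|$ through $2\cosh(\t_\g/2)=|\mathrm{tr}\,\rho(\g)|$. Thus it suffices to prove $|\mathrm{tr}\,\rho(\G_1(P,n))|=|\mathrm{tr}\,\rho(\G_2(P,n))|$ for every such $\rho$. First I would pin down the two group elements. Basing $\A$ at the self-intersection point $P$ and letting $u,v$ be the two loops into which the self-intersection cuts $\A$, so that $\A=uv$ based at $P$, the loop-product description of $\G_1$ and $\G_2$, made precise through the explicit lifts of Lemma~\ref{liftequivalence}, yields
\[
\la\G_1(P,n)\ra=\la (uv)^n(vu)\ra,\qquad \la\G_2(P,n)\ra=\la (vu)^n(uv)\ra .
\]
Here $\A_1$ and $\A_2$ are parallel copies of $\A=uv$, the points $P_1,P_2$ are the two crossings produced at $P$ by the fattening, and reading $\A_1^n$ and $\A_2$ off at $P_1$ (resp.\ at $P_2$) gives the factor $(uv)^n$ times $vu$ (resp.\ $(vu)^n$ times $uv$).

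The crux is the observation that the cyclic word of $\G_2$ is the reversal of the cyclic word of $\G_1$: writing out $(uv)^n(vu)$ letter by letter and reversing the order produces $uv(vu)^n$, which is a cyclic rotation of $(vu)^n(uv)$. I would then invoke the classical fact that for any two elements of $SL_2$ the trace of a word equals the trace of its reversal. The cleanest justification is that any irreducible pair $(\rho(u),\rho(v))$ is simultaneously conjugate into symmetric matrices, and for symmetric matrices the reversal of a word is precisely the transpose of the corresponding product, under which the trace is invariant; the reducible pairs then follow by continuity. Combined with the cyclic invariance of the trace, this gives $\mathrm{tr}\,\rho(\G_1(P,n))=\mathrm{tr}\,\rho(\G_2(P,n))$ for every $\rho$, hence $\t_{\G_1(P,n)}=\t_{\G_2(P,n)}$ for every hyperbolic structure. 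Note that this equality holds for all $n$; the role of $N$ in the length statement is only to guarantee that the two classes are genuinely distinct and not inverse to one another. Already for $n=1$ one has $(uv)(vu)\sim(vu)(uv)$, the same class, so some lower bound on $n$ is unavoidable, and distinctness holds once $n$ is large because the factor $\A^n$ then dominates the reduced cyclic words.

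It remains to treat the filling statement, which I expect to be the main obstacle. For large $n$ the geodesic representative of $\la\G_i(P,n)\ra$ fellow-travels the geodesic $\A$ wrapped $n$ times, so it should meet every essential simple closed geodesic that $\A$ meets; since $\A$ is filling it meets all of them, and I would argue that the extra wrapping forces the complement of the geodesic representative of $\la\G_i(P,n)\ra$ to consist only of discs, punctured discs and annuli. Making ``fellow-travels'' and ``meets everything $\A$ meets'' quantitative, and ruling out that the complement acquires an essential subsurface, is the delicate point; here the explicit control on the lifts provided by Lemma~\ref{liftequivalence} should give the needed uniformity, with $N$ chosen so large that no complementary region can fail to be one of the allowed types. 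Once this is done, Theorem~\ref{mainlength} follows immediately by specializing to $g=v$, since then $\la\G_1(P,n)\ra=\la\A^n v\A v^{-1}\ra$ and $\la\G_2(P,n)\ra=\la v\A^n v^{-1}\A\ra$.
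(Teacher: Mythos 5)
Your reduction of the length equality to a trace identity is correct and is a genuinely different route from the paper's. Writing $\A=uv$ at $P$ (so $g=u^{-1}$ and $\A^g=vu$ in the notation of Lemma~\ref{liftequivalence}), the cyclic word of $\G_2(P,n)$ is indeed the reversal of that of $\G_1(P,n)$, and the invariance of $SL_2$ traces under word reversal (simultaneous conjugation of an irreducible pair into symmetric matrices, then transpose; reducible pairs by continuity) gives $l_X(\G_1(P,n))=l_X(\G_2(P,n))$ for every $X$ and every $n$. The paper instead proves $\mathrm{trace}(A^nB)=\mathrm{trace}(B^nA)$ with $A=\A$, $B=\A^g$, either geometrically via the hyperbolic cosine rule applied to two triangles with equal sides and equal included angle, or by induction using $\mathrm{trace}(AB)=\mathrm{trace}(A)\mathrm{trace}(B)-\mathrm{trace}(AB^{-1})$; your reversal argument is a clean and slightly more conceptual substitute for that step.

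The genuine gap is everything else. Length equivalence requires $\la\G_1(P,n)\ra\neq\la\G_2(P,n)\ra$ and $\la\G_2(P,n)\ra\neq\la\G_1(P,n)^{-1}\ra$, and you dispose of this with the assertion that ``the factor $\A^n$ dominates the reduced cyclic words.'' That is not an argument, and it is exactly the point the paper flags as non-trivial (see the remark following the lemma). Your own observation shows why: the two classes are reverses of one another, so they have the same length in every hyperbolic metric and no trace or length-spectrum invariant can separate them, and for $n=1$ they actually coincide as conjugacy classes, so some genuinely geometric input is needed to see when the reversal stops being realized by an inner automorphism. The paper's Lemma~\ref{le2} supplies this: for $n>N$ as in \cite[Lemma 7.4]{GC}, a conjugating element would carry the quasi-geodesic $\g_1(p,n)$ onto $\g_2(p,n)$, hence match their vertices, but the intersection signs at those vertices are $\e_1$ and $\e_2=-\e_1$ respectively, a contradiction; conjugacy to the inverse is excluded because it would force $\A$ to be conjugate to $\A^{-1}$, impossible for a hyperbolic element of $PSL_2(\R)$. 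You would need an argument of this kind, and this is also the only place the threshold $N$ genuinely enters. The filling claim is likewise deferred rather than proved; the paper's Lemma~\ref{filling} carries it out by exhibiting, for each simple closed geodesic $z$ meeting $\A$, disjoint lifts of $z$ crossing successive segments of the broken geodesic and using convexity to derive a contradiction with $i(z,\A)$ if no lift of $z$ met the axis of $(\A^g)^n\A$.
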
   

\begin{remark} There is a geometric way to see that the geodesics in the free homotopy classes of $\la \G_1(P,n)\ra $ and $\la \G_2(P,n)\ra $ are of equal length. 

First observe that any non-simple closed geodesic $\A$ is the immersion of a figure eight geodesic. Furthermore, you can choose any self-intersection
point of $\A$ to be the image of the self-intersection point of the figure eight geodesic. This immersion extends to an immersed pair of pants (for instance by taking a $\e$-neighbourhood of the immersed curve). Denote by $Y$ the lift of the immersed pants by the immersion.
Now it is easy to see that the lifts of $\la \G_1(P,n)\ra $ and $\la \G_2(P,n)\ra $ to the pair of pants $Y$ are reflected images of one another on $Y$ and thus have the same length. Showing they are not conjugate to each other in $\pi_1(F)$ is not straightforward. 
\end{remark}

 \section{Lifts of the curves}

In this section we construct a lift of a piecewise geodesic curve in the free homotopy class $\la\G_i(P,n)\ra$ and describe the unique geodesic in the free homotopy class $\la\G_i(P,n)\ra$. 

\begin{figure}[h]
  \centering
    \includegraphics[trim = 40mm 45mm 40mm 20mm, clip, width=8cm]{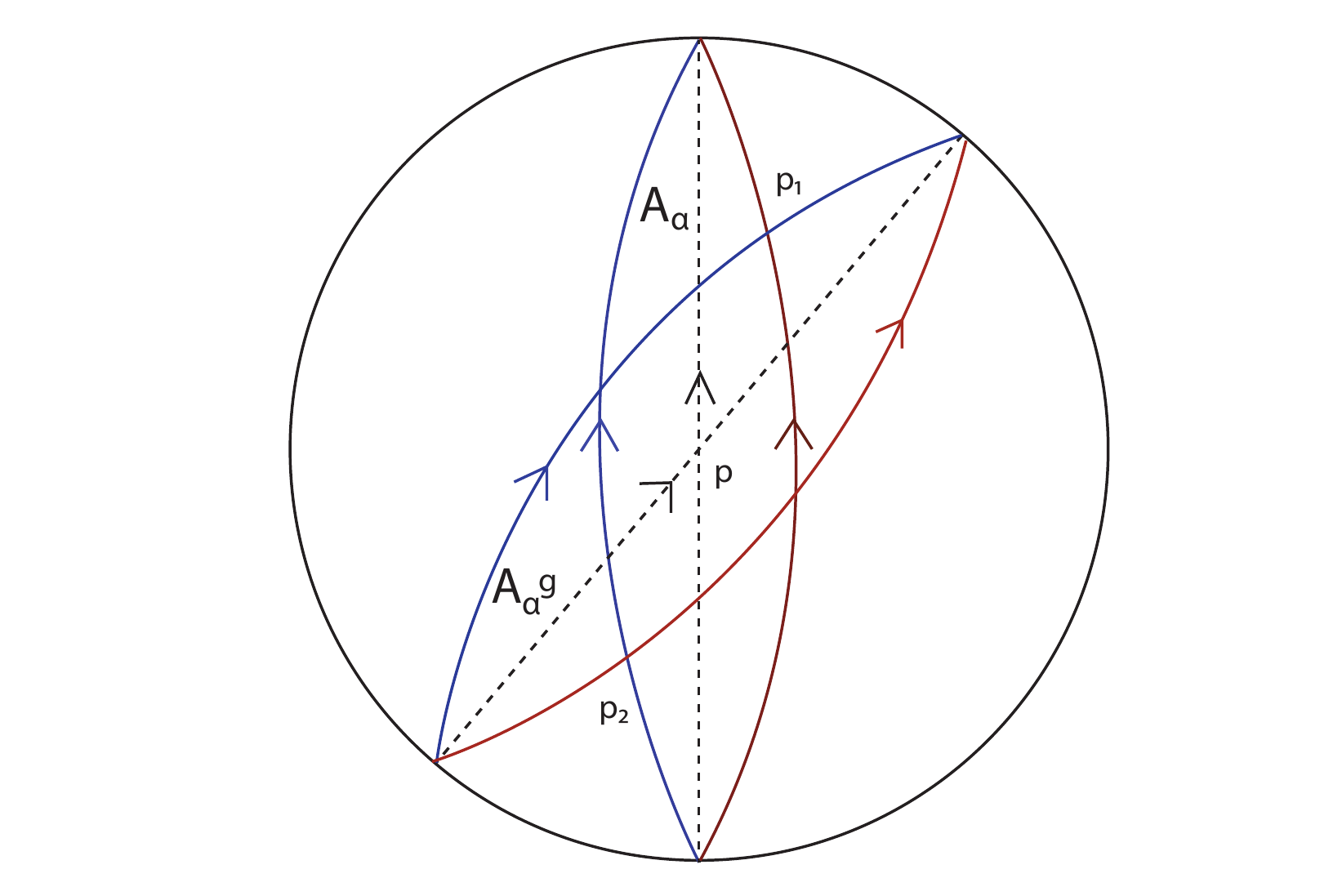}
     \caption{Lift of the Figure \ref{fattening} to the unit disc.}\label{liftoffattening}
\end{figure}
Let $\A$ be a self-intersecting geodesic in an oriented hyperbolic surface $F$. Let $P$ be a self-intersection point of $\A$. We consider $P$ to be the base-point of the fundamental group of $F$. Therefore we consider $\A$ to be an element of $\pi_1(F)$.

Let $A_{\A}$ be the axis of $\A$ in the upper half plane $\H$. Let $p$ be a lift of the self-intersection point $P$ on $A_{\A}$. By \cite[Section-3]{GC}, there exists $g$ in $\pi_1(F)$ such that $gA_{\A}\cap A_{\A}=\{p\}$. The axis $gA_{\A}$ is same as the axis of ${\A^g}$. 

From Figure \ref{liftoffattening}, the sign of the intersection between $A_{\A}$ and $A_{\A^g}$ at $p$ is $\e_1$ and the sign of the intersection between $A_{\A^g}$ and $A_{\A}$ at $p$ is $\e_2$. 

Let $\B_1:[0,1]\rightarrow \H$ be a curve  from $\B_1(0)=(\A^g)^{-1}p$ to $\B_1(1)=\A^np$, whose image is given by the concatenation of the geodesic segment of $A_{\A^g}$ from $(\A^g)^{-1}p$ to $p$ with the geodesic segment of $A_{\A}$ from $p$ to $\A^np$. As $\A^n\A^g(\B_1(0))=\B_1(1)$, we  extend $\B_1$ periodically  to a map $\g_1(p,n):\R\rightarrow\H$ such that $\g_1(p,n)(t)=\B_1(t)$ for $t\in[0,1]$ and $\g_1(p,n)(t+1)=\A^n\A^g(\g_1(p,n)(t))$ for all $t$.  

Similarly let $\B_2:[0,1]\rightarrow \H$ be a curve  from $\B_2(0)=(\A)^{-1}p$ to $\B_2(1)=(\A^g)^np$, whose image is given by the concatenation of the geodesic segment of $A_{\A}$ from $(\A)^{-1}p$ to $p$ with the geodesic segment of $A_{\A^g}$ from $p$ to $(\A^g)^np$. As $(\A^g)^n\A(\B_2(0))=\B_2(1)$, we  extend $\B_2$ periodically  to a map $\g_2(p,n):\R\rightarrow\H$ such that $\g_2(p,n)(t)=\B_2(t)$ for $t\in[0,1]$ and $\g_2(p,n)(t+1)=(\A^g)^n\A(\g_2(p,n)(t))$ for all $t$.

We call the lifts of $P$ in $\g_i(p,n)$ as the \emph{vertices} of $\g_i(p,n)$. 

\begin{remark}\label{proj}
As $\A$ is freely homotopic to $\A_1$ and $\A_2$, the projection of $\B_1$ is freely homotopic to $\G_1(P,n)$ and the projection of $\B_2$ is freely homotopic to $\G_2(P,n)$. 
\end{remark}

\begin{remark}\label{intersection}
Let $\mu_1=\A^n\A^g$ and $\mu_2=(\A^g)^n\A$. Denote the geodesic arc from $P$ to $\A^n(P)$ by $I_1$, the geodesic segment from $\A^n(P)$ to $\A^n\A^g(P)$ by $I_2$, the geodesic segment from $P$ to $(\A^g)^n$ by $J_1$ and the geodesic segment from $(\A^g)^n(P)$ to $(\A^g)^n\A (P)$ by $J_2$. Then $\g_1(p,n)$ consists of geodesic segments of the form $\mu_1^k(I_1)$ and $\mu_2^k(I_2)$ occurring alternately. Similarly $\g_2(p,n)$ consists of geodesic segments of the form $\mu_1^k(J_1)$ and $\mu_2^k(J_2)$ occurring alternately. 

Every vertex of $\g_1(p,n)$ corresponds to an intersection point between $\mu_1^k(I_1)$ and $\mu_2^k(I_2)$. Similarly, every vertex of  $\g_2(p,n)$ corresponds to an intersection point between  $\mu_1^k(J_1)$ and $\mu_2^k(J_2)$.
\end{remark}

\begin{remark}\label{signremark}  The sign of the intersection  between $\mu_1^k(I_1)$ and $\mu_2^k(I_2)$ at any  vertex of $\g_1(p,n)$ is equal to $\epsilon_1$. Similarly, the sign of the intersection between $\mu_1^k(J_1)$ and $\mu_2^k(J_2)$ at any vertex of $\g_2(p,n)$, is equal to $\epsilon_2$. 
\end{remark} 

 \begin{figure}[h]
  \centering
    \includegraphics[trim = 40mm 10mm 40mm 10mm, clip, width=6cm]{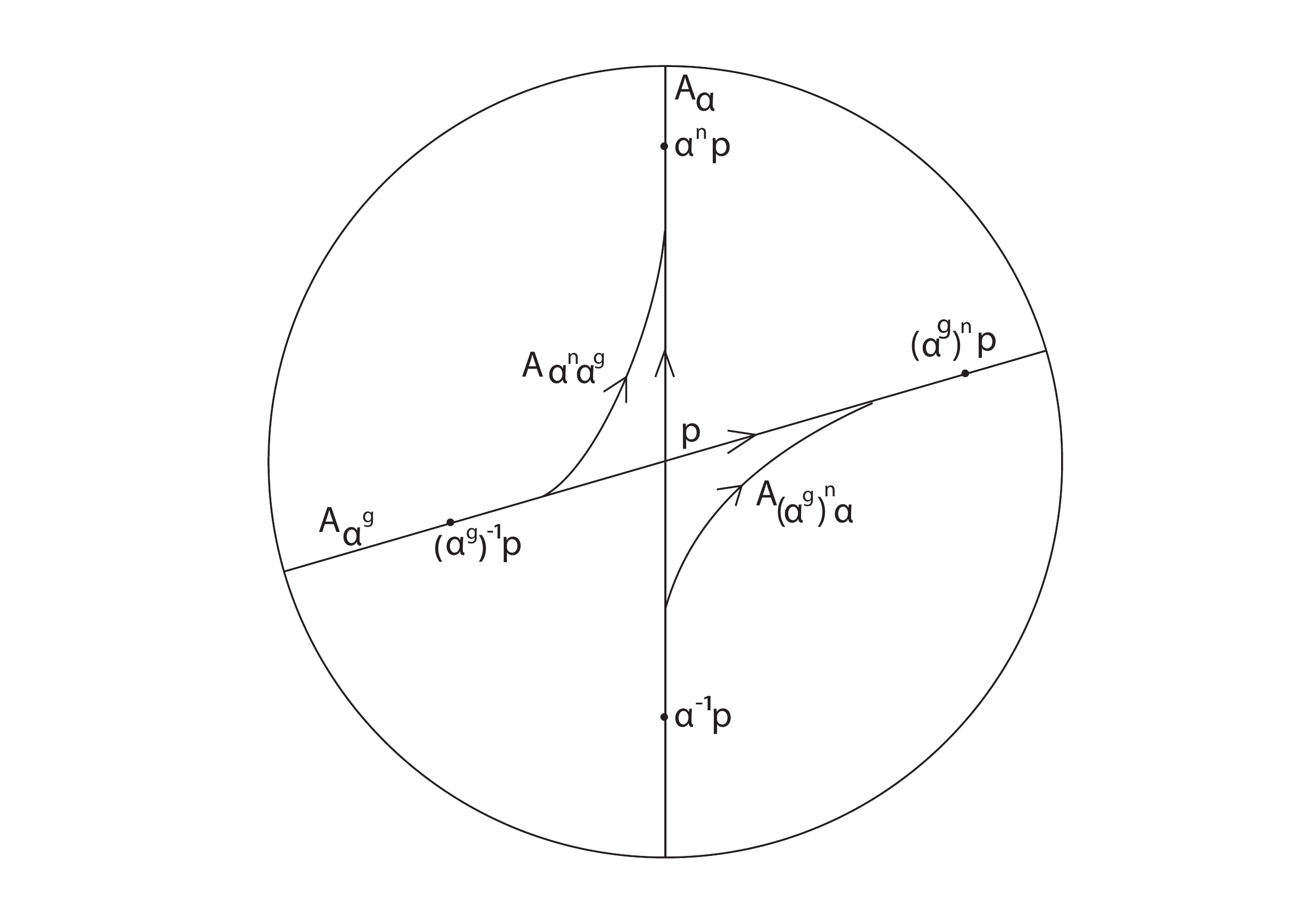}
     \caption{}\label{geodesiclift}
\end{figure} 

 By \cite[Lemma 7.1]{GC}, $\g_i(p,n)$ is a quasi-geodesic for $i\in\{1,2\}$. As every quasi geodesic in $\H$ is homotopic (fixing endpoints) to a unique geodesic, by Remark \ref{proj},  the unique geodesic corresponding to the free homotopy class $\la\G_i(P,n)\ra$ is the projection of the axis of the quasi geodesic $\g_i(p,n)$.
 
 By \cite[Theorem 7.38.6]{B} and Figure \ref{geodesiclift}, the axis of $\g_1(p,n)$ is the axis of $\A^n\A^g$ and the axis of $\g_2(p,n)$ is the axis of $(\A^g)^n\A$. 
 
 Therefore the free homotopy class  $\la\G_1(P,n)\ra$ corresponds to the conjugacy class of $\A^n\A^g$ and the free homotopy class  $\la\G_1(P,n)\ra$ corresponds to the conjugacy class of  $(\A^g)^n\A$.
 
 From the above discussion we obtain the following lemma. 
 
 \begin{lemma}\label{liftequivalence} Given any self-intersecting geodesic $\A$ in $F$, any self-intersection point $P$ of $\A$ and any positive integer $n$, the two free homotopy classes $\la\G_1(P,n)\ra$ and $\la\G_2(P,n)\ra$ correspond to the conjugacy classes of $\A^n\A^g$ and $(\A^g)^n\A$ respectively, where $g\in \pi_1(F)$ is determined by a fixed lift p of $P$ on $A_{\A}$. 
 \end{lemma}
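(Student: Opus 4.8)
The plan is to read the conjugacy class off directly from the endpoints of the fundamental arcs $\B_1$ and $\B_2$, and then use Remark \ref{proj} to transfer the answer to $\la\G_i(P,n)\ra$. Set $\mu_1=\A^n\A^g$ and $\mu_2=(\A^g)^n\A$ as in Remark \ref{intersection}. The first observation is that each $\B_i$ is an arc in $\H$ joining a lift of $P$ to its $\mu_i$-translate: indeed $\B_1$ runs from $(\A^g)^{-1}p$ to $\A^np=\mu_1\bigl((\A^g)^{-1}p\bigr)$, and $\B_2$ runs from $\A^{-1}p$ to $(\A^g)^np=\mu_2(\A^{-1}p)$, both endpoints being lifts of the base point $P$. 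An arc from $x$ to $\gamma x$ projects to a loop in $F$ based at $P$ representing $\gamma\in\pi_1(F,P)$ (with the convention fixed by the orientations chosen in the construction), so the projection of $\B_i$ is a based loop representing $\mu_i$, and its free homotopy class is the conjugacy class of $\mu_i$.

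Combining this with Remark \ref{proj}, which identifies the projection of $\B_i$ with a representative of $\la\G_i(P,n)\ra$, I would conclude at once that $\la\G_1(P,n)\ra$ is the conjugacy class of $\A^n\A^g$ and $\la\G_2(P,n)\ra$ is the conjugacy class of $(\A^g)^n\A$. To make the statement self-contained I would finally note, as supplied by \cite[Section 3]{GC}, that the element $g$ with $gA_{\A}\cap A_{\A}=\{p\}$ is determined once the lift $p$ of $P$ on $A_{\A}$ is fixed, so the entire construction --- and hence both conjugacy classes --- depends only on $\A$, $P$ and $n$.

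The same conclusion can be reached, and the geodesic representative simultaneously located, through the quasi-geodesic picture already assembled: since $\g_i(p,n)$ is $\mu_i$-invariant and is a quasi-geodesic by \cite[Lemma 7.1]{GC}, it stays within bounded Hausdorff distance of a unique bi-infinite geodesic whose two endpoints on $\partial\H$ are fixed by $\mu_i$; as a nontrivial hyperbolic element has exactly two such fixed points, this geodesic is the axis $A_{\mu_i}$, whence its projection is the geodesic in $\la\G_i(P,n)\ra$ and the free homotopy class is again the conjugacy class of $\mu_i$. This is the route in which \cite[Theorem 7.38.6]{B} and the configuration of the crossing axes in Figure \ref{geodesiclift} enter, to certify that the endpoints contributed by the alternating geodesic segments of Remark \ref{intersection} really are the endpoints of the axis of the claimed product.

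I expect the only genuine subtlety to lie in this second route rather than the first. The direct reading of the conjugacy class from $\B_i$ is essentially formal; the delicate point, should one insist on identifying the axis, is that $\mu_i$ is a product of two hyperbolic elements whose axes cross at $p$, so hyperbolicity of $\mu_i$ and the correct (unswapped) pairing of endpoints are not automatic and must be extracted from the orientation data and the sign relation $\e_1=-\e_2$ recorded in Figure \ref{fattening} and Figure \ref{geodesiclift}. For the stated lemma, however, this can be bypassed entirely, so I would present the direct argument as the proof and relegate the axis identification to the discussion needed for the subsequent length computation.
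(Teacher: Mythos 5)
Your proposal is correct, and your primary argument is genuinely different from --- and more elementary than --- the paper's. The paper proves the lemma entirely by the second of your two routes: it invokes \cite[Lemma 7.1]{GC} to see that $\g_i(p,n)$ is a quasi-geodesic, hence lies at bounded distance from a unique geodesic, and then uses \cite[Theorem 7.38.6]{B} together with Figure \ref{geodesiclift} to identify that geodesic with the axis of $\A^n\A^g$ (respectively $(\A^g)^n\A$), from which the conjugacy-class statement follows. Your first route bypasses all of this: since $\B_1$ joins $(\A^g)^{-1}p$ to $\A^n p=\mu_1\bigl((\A^g)^{-1}p\bigr)$ and $\B_2$ joins $\A^{-1}p$ to $(\A^g)^n p=\mu_2(\A^{-1}p)$, the standard correspondence between deck transformations and based loops reads off the conjugacy class of $\mu_i$ directly, and Remark \ref{proj} finishes the identification with $\la\G_i(P,n)\ra$. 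This is a cleaner proof of the lemma as literally stated; what the paper's heavier route buys is the simultaneous identification of the \emph{axis} of $\g_i(p,n)$ with $A_{\mu_i}$, which is not needed for Lemma \ref{liftequivalence} itself but is used essentially in Lemma \ref{le2} (via \cite[Lemma 7.4]{GC}) and in the length computation of Lemma \ref{le1}. You correctly diagnose this division of labor, and you also correctly flag the one genuinely delicate point in the quasi-geodesic route, namely that hyperbolicity of the product $\mu_i$ and the pairing of endpoints must be extracted from the crossing-axes configuration rather than assumed. Both arguments are sound; yours is preferable for this lemma in isolation, while the paper's is organized to feed the later lemmas.
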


\section{Final results}

\begin{lemma}\label{le1} Let $\A$ and $g$ be two elements in $\pi_1(F)$ such that $A_{\A}$ intersects $A_{\A^g}$. Then for any hyperbolic metric $X$ on $F$,  $l_X(\A^n\A^g)=l_X((\A^g)^n\A)$, where $l_{X}(\A)$ denotes the length of the geodesic freely homotopic to $\A$ in the metric $X$.  
\end{lemma}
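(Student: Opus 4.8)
The plan is to pass from lengths to traces. Recall that $l_X(\A^n\A^g)=\tau_{\A^n\A^g}$ is the translation length, and that for any hyperbolic $M\in PSL_2(\R)$ with a lift $\widetilde{M}\in SL_2(\R)$ one has $2\cosh(\tau_M/2)=|\mathrm{tr}(\widetilde{M})|$; since $\cosh$ is injective on $[0,\infty)$, it suffices to prove $|\mathrm{tr}(\A^n\A^g)|=|\mathrm{tr}((\A^g)^n\A)|$. In our setting both elements are hyperbolic, their axes being precisely the axes of the quasi-geodesics $\g_1(p,n)$ and $\g_2(p,n)$ constructed above. From now on I regard $\A$ and $g$ as matrices in $SL_2(\R)$ and compute traces, freely using the two standard facts $\mathrm{tr}(MN)=\mathrm{tr}(NM)$ and $\mathrm{tr}(M)=\mathrm{tr}(M^{-1})$ valid for $M,N\in SL_2(\R)$.

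First I would simplify the right-hand side. Writing $\A^g=g\A g^{-1}$, so that $(\A^g)^n=g\A^n g^{-1}$, and using cyclic invariance,
\[
\mathrm{tr}\big((\A^g)^n\A\big)=\mathrm{tr}\big(g\A^n g^{-1}\A\big)=\mathrm{tr}\big(\A^n g^{-1}\A g\big)=\mathrm{tr}\big(\A^n \A^{g^{-1}}\big).
\]
Thus the lemma reduces to the single identity $\mathrm{tr}(\A^n\A^g)=\mathrm{tr}(\A^n\A^{g^{-1}})$. Next I would strip off the exponent $n$ by Cayley--Hamilton: in $SL_2(\R)$ one has $\A^n=S_{n-1}(t)\,\A-S_{n-2}(t)\,I$ with $t=\mathrm{tr}(\A)$ and $S_k$ the relevant Chebyshev polynomials, so for any $M$,
\[
\mathrm{tr}(\A^n M)=S_{n-1}(t)\,\mathrm{tr}(\A M)-S_{n-2}(t)\,\mathrm{tr}(M).
\]
Applying this to $M=\A^g$ and to $M=\A^{g^{-1}}$, and noting $\mathrm{tr}(\A^g)=\mathrm{tr}(\A^{g^{-1}})=\mathrm{tr}(\A)$, the claim for general $n$ follows once the base case $\mathrm{tr}(\A\A^g)=\mathrm{tr}(\A\A^{g^{-1}})$ is established.

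Finally I would prove the base case by diagonalizing. Since $\A$ is hyperbolic I may conjugate so that $\A=\mathrm{diag}(\lambda,\lambda^{-1})$; writing $g=\left(\begin{smallmatrix}p&q\\ r&s\end{smallmatrix}\right)$ with $ps-qr=1$, a direct computation gives $\mathrm{tr}(\A\,g\A g^{-1})=(\lambda^2+\lambda^{-2})\,ps-2qr$. Because replacing $g$ by $g^{-1}=\left(\begin{smallmatrix}s&-q\\ -r&p\end{smallmatrix}\right)$ leaves both products $ps$ and $qr$ unchanged, the identical expression results for $\mathrm{tr}(\A\,g^{-1}\A g)$, which is exactly the desired $\mathrm{tr}(\A\A^g)=\mathrm{tr}(\A\A^{g^{-1}})$. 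Chaining the three steps yields $\mathrm{tr}(\A^n\A^g)=\mathrm{tr}((\A^g)^n\A)$, whence equality of absolute traces and of lengths.

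I do not anticipate a serious obstacle: the argument is essentially one trace computation, and the only points needing care are invoking the length--trace dictionary on consistent lifts to $SL_2(\R)$ (so signs and absolute values are handled correctly) and confirming that the two words are genuinely hyperbolic. It is worth recording the conceptual reason behind the base-case identity, matching the Remark: the reflection of $\H$ fixing the crossing point $p$ and interchanging the axes $A_\A$ and $A_{\A^g}$ conjugates $\A^n\A^g$ to $(\A^g)^n\A$, placing them in a common $\mathrm{Isom}(\H)$-orbit; the computation above is simply a clean way to certify this without constructing the reflection explicitly.
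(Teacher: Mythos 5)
Your proof is correct, and it is in the same family as the paper's algebraic proof: both reduce the length statement to an equality of traces in $SL_2(\R)$ via $2\cosh(l_X/2)=|\mathrm{tr}|$. The execution differs, though. The paper sets $A=\A$, $B=\A^g$ and proves $\mathrm{tr}(A^nB)=\mathrm{tr}(B^nA)$ by induction on $n$, using the Fricke relation $\mathrm{tr}(MN)=\mathrm{tr}(M)\mathrm{tr}(N)-\mathrm{tr}(MN^{-1})$ together with $\mathrm{tr}(A)=\mathrm{tr}(B)$; the induction never needs to know what $g$ is, only that $A$ and $B$ have equal trace. You instead use cyclic invariance to rewrite $\mathrm{tr}((\A^g)^n\A)=\mathrm{tr}(\A^n\A^{g^{-1}})$, strip the exponent with Cayley--Hamilton (which is just the Fricke relation in Chebyshev clothing), and settle the base case $\mathrm{tr}(\A\A^g)=\mathrm{tr}(\A\A^{g^{-1}})$ by diagonalizing $\A$ and computing $(\lambda^2+\lambda^{-2})ps-2qr$, an expression visibly invariant under $g\mapsto g^{-1}$; I checked the computation and it is right. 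Your route isolates a cleaner structural fact (the $n=1$ identity plus its symmetry under inverting $g$) at the cost of an explicit matrix calculation, while the paper's induction is coordinate-free but proves the $n$-dependence and the symmetry simultaneously. Be aware the paper also gives a second, purely geometric proof: the axes $A_{\A}$ and $A_{\A^g}$ meeting at $p$ produce two hyperbolic triangles with two pairs of equal sides ($n\tau_\A/2$ and $\tau_\A/2$) and equal included angle, so the hyperbolic cosine rule gives equal third sides, which are $l_X(\A^n\A^g)/2$ and $l_X((\A^g)^n\A)/2$; this is the rigorous version of the reflection heuristic you mention at the end.
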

\begin{proof}
We give two proofs of this lemma. The first one is geometric and the second one is algebraic.

\textbf{Geometric proof:} 
Let $X$ be any hyperbolic metric in $F$ and $\theta$ be the angle between $A_{\A}$ and $A_{\A^g}$ at the point $P$ in the positive direction of both axes. Consider the triangles $\triangle pqr$ and $\triangle pq_1r_1$ in Figure \ref{axsign} and the angle $\mu=\pi-\theta$. 

By \cite[Theorem 7.38.6]{B}, we have the following:

\begin{itemize}
\item $l(pq)=l_X(\A^n)/2=n\tau_{\A}/2=n\tau_{\A^g}/2=l_X((\A^g)^n)/2=l(pq_1)$.
\item $l(rp)=l_X(\A^g)/2=\tau_{\A^g}/2=\tau_{\A}/2=l_X(\A)/2=l(r_1p)$.
\item $\angle rpq=\mu=\angle r_1pq_1$.
\item $l(rq)=l_X(\A^n\A^g)/2$ and $l(r_1q_1)=l_X((\A^g)^n\A)/2$.
\end{itemize}

\begin{figure}[h]
  \centering
    \includegraphics[trim = 10mm 10mm 10mm 10mm, clip, width=10cm]{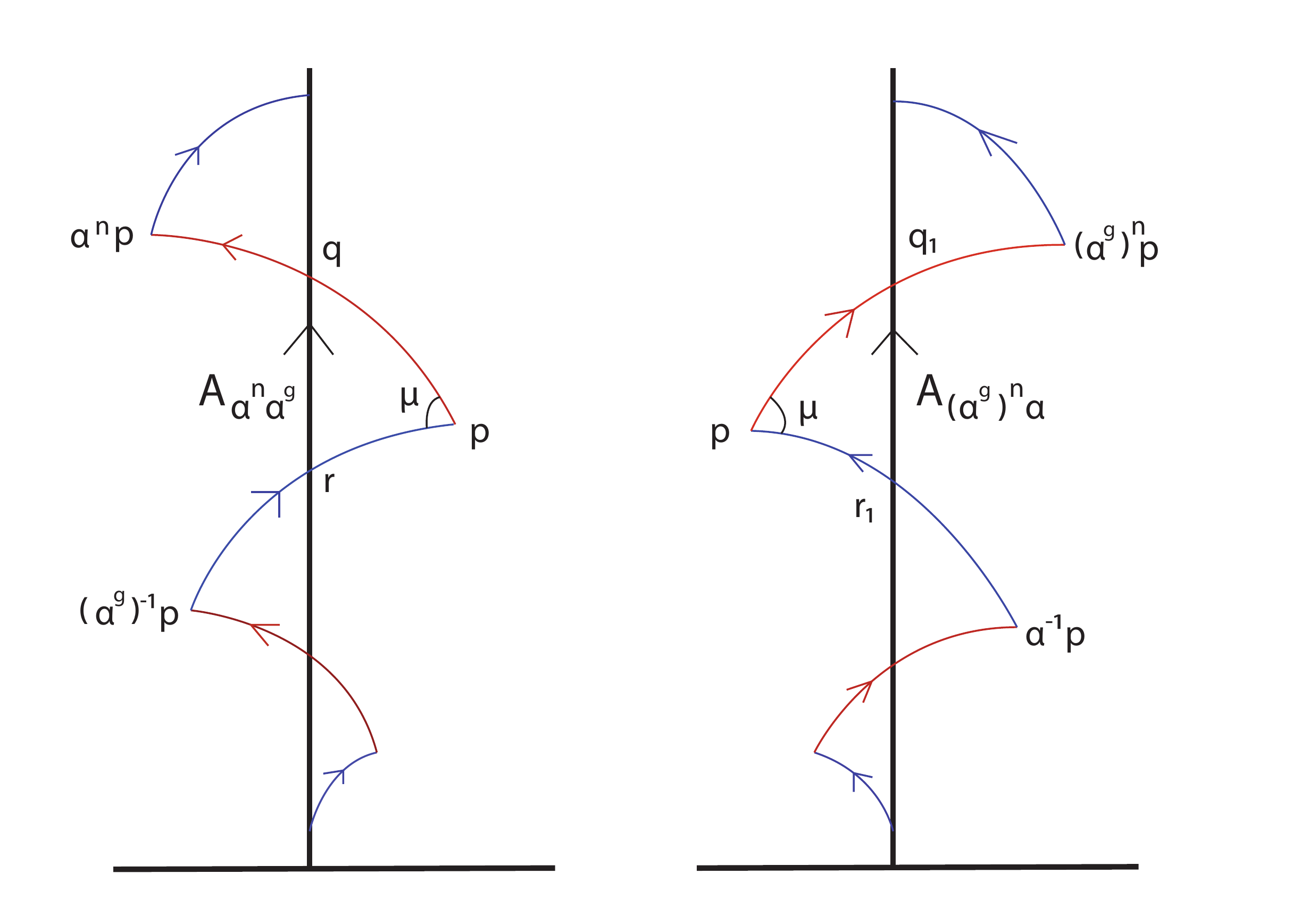}
     \caption{}\label{axsign}
\end{figure}

 Therefore by the hyperbolic cosine rule,  
 $$l_X(\A^n\A^g)=l_X((\A^g)^n\A).$$
\textbf{Algebraic proof:} We use the following equalities of $PSL_2(\R)$ which follows from straightforward computation. For details see \cite[Lemma 3.7.9]{Bu} 

If $A$ and $B$ belong to $PSL_2(\R)$ then the we have the  following equalities.
\begin{align*} 
trace(A) &= trace(A^{-1})   \\ 
trace(AB) &=  trace(BA) \\
trace(A) &= trace(BAB^{-1}) \\
trace(AB) &= trace(A)trace(B)-trace(AB^{-1}) 
\end{align*} 

If $A$ is a matrix representing the geodesic $\A$ in $X$ then $$l_X(\A)=\cosh^{-1}(\frac{|trace(A)|}{2}).$$

Let $A=\A$ and $B=\A^{g}$. By the above comment, it is sufficient to show that $trace(A^{n}B)=trace(B^nA)$ for all $n$ in $\N$. We prove this using induction on $n$. 

For $n=1$, this follows from the second equation. 

Suppose the result is true for all $k<n$. Using the above equations, we have  
\begin{align*} 
trace(A^nB) &= trace(BA^n).   \\ 
 &=  trace(BA^{n-1})trace(A)-trace(BA^{n-2}). \\
&= trace(A^{n-1}B)trace(A)-trace(A^{n-2}B). \\
 &= trace(B^{n-1}A)trace(B)-trace(B^{n-2}A)\\
 &= trace(AB^{n-1})trace(B)-trace(AB^{n-2})\\
 &= trace(AB^n)\\
 &= trace(B^nA). 
\end{align*}
\end{proof}

\begin{lemma}\label{le2}
There exists a positive integer $N>1$ such that for all $n>N$, $\A^n\A^g$ is not conjugate to $(\A^g)^n\A$ or $((\A^g)^n\A)^{-1}$. 
\end{lemma}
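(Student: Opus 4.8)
The plan is to argue by contradiction and to reduce the statement to a clean algebraic identity in $\pi_1(F)$. Write $b:=\A^g$, so that the two elements in question are $u_n:=\A^n b$ and $v_n:=b^n\A$, and recall from the hypothesis that $A_{\A}$ and $A_{b}=A_{\A^g}$ are distinct geodesics meeting transversally in the single point $p$. By Lemma \ref{le1} the hyperbolic elements $u_n$ and $v_n$ have equal trace, so they are already conjugate in $PSL_2(\R)$; the entire content of the lemma is that the conjugating isometry cannot be taken inside the discrete group $\pi_1(F)$ once $n$ is large. Suppose, for contradiction, that $u_n$ is conjugate in $\pi_1(F)$ to $v_n$ for infinitely many $n$.

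The heart of the matter is the following algebraic endgame, which shows that a conjugacy cannot persist through two different powers by a common element. Suppose some fixed $h\in\pi_1(F)$ satisfies $h u_n h^{-1}=v_n$ for two indices $n>m$. Since $u_n u_m^{-1}=\A^{n-m}$ and $v_n v_m^{-1}=b^{n-m}$, dividing the two conjugacy relations gives $h\A^{n-m}h^{-1}=b^{n-m}$, hence $(h\A h^{-1})^{n-m}=b^{n-m}$. Because $\pi_1(F)$ is torsion-free and roots of hyperbolic elements are unique, this forces $h\A h^{-1}=b$. Substituting back into $h u_n h^{-1}=v_n$ and cancelling the factor $b^n=h\A^n h^{-1}$ gives $h b h^{-1}=\A$, which combined with $h\A h^{-1}=b$ yields $h^2\A h^{-2}=\A$, so $h^2$ centralises $\A$. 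If $h^2=1$ then $h=1$ and $b=\A$, which is absurd; otherwise $h^2$ is hyperbolic with axis $A_{\A}$, and since $h$ commutes with $h^2$ it permutes the fixed points $\{\A^{+},\A^{-}\}$ of $h^2$ and therefore maps the unoriented geodesic $A_{\A}$ to itself. But then $A_{b}=h A_{\A}=A_{\A}$, contradicting $A_{\A}\cap A_{b}=\{p\}$. Thus it suffices to produce a single $h\in\pi_1(F)$ realising the conjugacy for at least two powers. The case $\la v_n^{-1}\ra$ is handled identically after replacing $b$ by $b^{-1}$ (and is in fact immediate from homology when $[\A]\neq 0$ in $H_1(F)$, since then $[u_n]=(n+1)[\A]=-[v_n^{-1}]$).

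It remains to pass from ``conjugate for infinitely many $n$'' to ``conjugate for two powers by one element'', and this is where the real difficulty lies. First I would localise the two geodesics: by \cite[Lemma 7.1]{GC} the broken geodesics $\g_1(p,n)$ and $\g_2(p,n)$ are quasi-geodesics with constants independent of $n$, so by Morse stability the axes of $u_n$ and $v_n$ stay within a uniform distance of them; since both $\g_1(p,n)$ and $\g_2(p,n)$ pass through the vertex $p$, both axes meet a fixed ball about $p$, and a direct computation with the north--south dynamics of $\A$ and $b$ shows that, as $n\to\infty$, the oriented axis of $u_n$ converges to the geodesic with endpoints $(b^{-1}\A^{-},\A^{+})$ and that of $v_n$ to the geodesic with endpoints $(\A^{-1}b^{-},b^{+})$, each having distinct endpoints since $A_{\A}\neq A_{b}$. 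The main obstacle is that the centraliser of $u_n$ translates along its axis by $\tau_{u_n}\sim n\,\tau_{\A}\to\infty$, so a conjugator cannot simply be normalised into the fixed ball by sliding along the axis; this growing indeterminacy is precisely the phenomenon flagged as non-trivial in the remark following Lemma \ref{proxymainlemma}. I would attack it by combining the fixed endpoint correspondence that any conjugator must realise, namely that it sends the attracting and repelling fixed points of $u_n$ to those of $v_n$, with the convergence-group dynamics of the discrete action of $\pi_1(F)$ on $\partial\H$: the aim is to show that a conjugacy surviving for infinitely many $n$ produces, in the limit, an element of $\pi_1(F)$ intertwining the dynamics of $\A$ and $b$, and hence a single $h$ working for two powers, contradicting the algebraic endgame above. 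Carrying out this limiting argument rigorously, rather than the two elementary reductions, is the part I expect to be genuinely hard.
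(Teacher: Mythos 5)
Your algebraic endgame is correct and self-contained: if a \emph{single} $h$ conjugates $\A^n\A^g$ to $(\A^g)^n\A$ for two exponents $n>m$, then dividing the two relations gives $h\A^{n-m}h^{-1}=(\A^g)^{n-m}$, hence $h\A h^{-1}=\A^g$ by uniqueness of roots, then $h\A^g h^{-1}=\A$, so $h^2$ centralises $\A$ and $h$ preserves $A_{\A}$, contradicting $A_{\A}\neq A_{\A^g}$. But the lemma does not follow from this, and you have flagged the missing step yourself: nothing forces the conjugators $h_n$ for different $n$ to coincide, nor even to lie in a compact set, because each $h_n$ is only determined up to the centraliser of $\A^n\A^g$, whose generator translates along the axis by roughly $(n+1)\tau_{\A}\to\infty$. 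The limiting, convergence-group argument you sketch to extract one $h$ working for two powers is not carried out, and as written your text proves only the weaker statement that no single $h$ conjugates for two exponents. A smaller issue: the claim that the $((\A^g)^n\A)^{-1}$ case follows ``identically after replacing $b$ by $b^{-1}$'' is not literally true --- dividing the relations there yields $h\A h^{-1}=\A^{-1}(\A^g)^{-1}\A$ rather than a clean power identity --- although a variant of the same computation does go through.

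The paper closes exactly this gap by a different mechanism, and does so for each single large $n$ rather than for pairs of exponents. The key input is the rigidity statement of \cite[Lemma 7.4]{GC}: for $n$ large (depending only on $l_X(\A)$), any $h\in\pi_1(F)$ carrying the axis of $\A^n\A^g$ to the axis of $(\A^g)^n\A$ must carry the entire broken quasi-geodesic $\g_1(p,n)$ onto $\g_2(p,n)$, and in particular must match up their vertices. Since the sign of intersection at every vertex of $\g_1(p,n)$ is $\e_1$ and at every vertex of $\g_2(p,n)$ is $\e_2$ (Remark \ref{signremark}), and signs are preserved by orientation-preserving isometries, this forces $\e_1=\e_2$, contradicting $\e_1=-\e_2$; the inverse case is handled the same way and produces the absurd conclusion that $\A$ is conjugate to $\A^{-1}$. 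To complete your route you would need a quantitative substitute for precisely this control on conjugators of long products; without it the proof is incomplete.
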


\begin{figure}[h]
  \centering
    \includegraphics[trim = 10mm 10mm 10mm 10mm, clip, width=10cm]{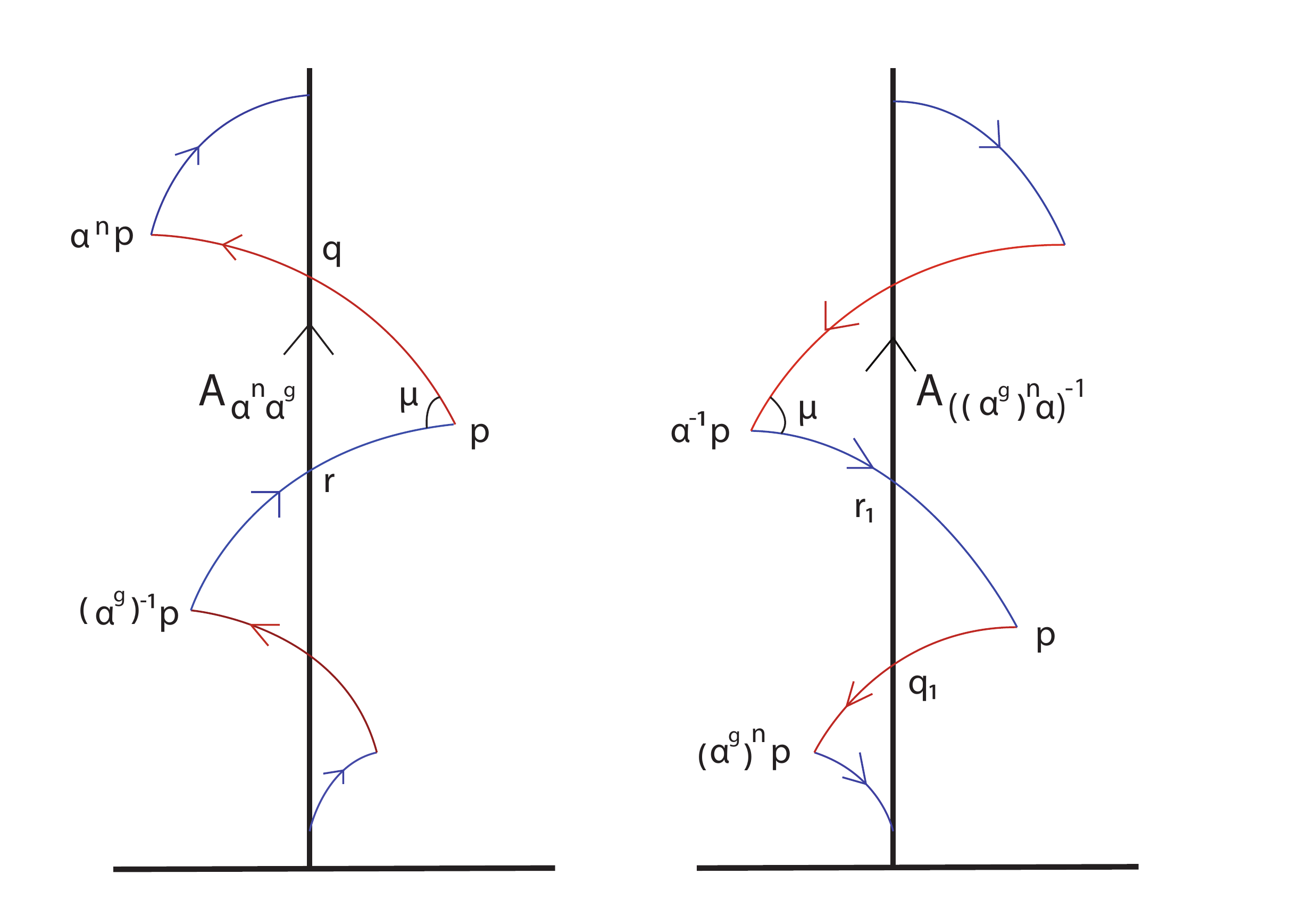}
     \caption{}\label{axsign1}
\end{figure}

\begin{proof}
Conjugacy of two elements in $\pi_1(F)$ is independent of the hyperbolic metric in $F$. So we fix a metric and show that there exists an integer $N>1$ such that for all $n>N$, $\A^n\A^g$ is not conjugate to $(\A^g)^n\A$ or $((\A^g)^n\A)^{-1}$. Then this is true for all hyperbolic metric in $F$.
  
Fix a hyperbolic metric $X$ on $F$. Let $l_{X}(\A)=L$. Choose the positive integer $N$ as in \cite[Lemma 7.4]{GC}, corresponding to the length $L$ and consider $n>N$.

{\bf{\underline{Case-I}}} If $\A^n\A^g$ is conjugate to $(\A^g)^n\A$ then there exists $h$ in $\pi_1(F)$ such that $hA_{\A^n\A^g}=A_{(\A^g)^n\A}$. By \cite[Lemma 7.4]{GC},  $h\g_1(p,n)=\g_2(p,n)$ and hence their vertices coincide. Therefore by Remark \ref{signremark}, $\e_1=\e_2$, a contradiction.

{\bf{\underline{Case-II}}} If $\A^n\A^g$ is conjugate to  $((\A^g)^n\A)^{-1}$ then  there exists $h$ in $\pi_1(F)$ such that   $hA_{\A^n\A^g}=A_{{((\A^g)^n\A})^{-1}}$. Again by \cite[Lemma 7.4]{GC}, $h\g_1(p,n)=\g_2(p,n)$. Therefore from Figure \ref{axsign1} and from the construction of $\g_i(p,n)$, $A_\A$ coincides with some translate of  $A_{\A^{-1}}$. This implies, $\A$ and $\A^{-1}$ are conjugate to each other in $\pi_1(F)$ which contradicts the fact that a hyperbolic element is never conjugate to its inverse in $PSL_2(\R)$. Hence $\A^n\A^g$ is not conjugate to  $((\A^g)^n\A)^{-1}$.
\end{proof}

\begin{lemma}\label{filling}
If $\A$ is a filling curves then there exists a positive integer $N>1$ such that for all $n>N$, $\A^n\A^g$ and $(\A^g)^n\A$ are filling curves. 
\end{lemma}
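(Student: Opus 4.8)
The plan is to show that the geodesic representative of $\A^n\A^g$ lies uniformly close to the geodesic $\A$ and thereby inherits its filling property; throughout I fix a hyperbolic metric $X$ on $F$ (whether a class fills is independent of the metric). Recall from the previous section that the lift $\g_1(p,n)$ contains the arc $I_1$, the segment of $A_{\A}$ from $p$ to $\A^n p$. This arc is exactly $n$ translation lengths long, so its projection to $F$ is the whole closed geodesic $\A$ (traversed $n$ times) as a point set. By \cite[Lemma 7.1]{GC} the curves $\g_1(p,n)$ are $(\lambda,\e)$-quasi-geodesics with constants that may be taken independent of $n$; hence, by stability of quasi-geodesics (the Morse Lemma), there is a constant $D$, independent of $n$, such that the axis of $\A^n\A^g$ lies within Hausdorff distance $D$ of $\g_1(p,n)$, in particular within $D$ of $I_1$. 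Projecting, the geodesic representative of $\la\G_1(P,n)\ra$ and the geodesic $\A$ are within Hausdorff distance $D$ of each other in $F$, with $D$ independent of $n$.

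Since $\A$ fills $F$, every complementary region of $\A$ has bounded inradius, i.e.\ there is $R_0$ with $F$ (outside its cusps and funnels) contained in the $R_0$-neighbourhood of $\A$. By the previous paragraph the geodesic $\G_1(P,n)$ is then $(R_0+D)$-dense in the same region, uniformly in $n$. Consequently every complementary region of $\G_1(P,n)$ has inradius at most $R_1:=R_0+D$; this already rules out any disc complementary region of large inradius, leaving only the possibility of topologically non-trivial (non-disc, non-peripheral) complementary regions of bounded geometry.

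To exclude these it suffices to show that for all large $n$ and every essential, non-peripheral simple closed geodesic $\delta$ (and every essential simple arc, to handle boundary and punctures) the geodesic $\G_1(P,n)$ crosses $\delta$. For a \emph{fixed} such $\delta$ this is straightforward: since $\A$ fills, $i(\A,\delta)\ge 1$, so $A_{\A}$ crosses $i(\A,\delta)$ lifts of $\delta$ per period and $I_1$ crosses at least $n\,i(\A,\delta)$ of them transversally; because the axis of $\A^n\A^g$ is $D$-close to $I_1$, it reproduces all but a bounded number of these crossings (those not too near the two ends of $I_1$), so that
\[ i\big(\G_1(P,n),\delta\big)\ \ge\ n\,i(\A,\delta)-C\ \xrightarrow[n\to\infty]{}\ \infty . \]
The same count applies to essential arcs, so for each individual $\delta$ the obstruction disappears once $n$ is large.

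The main obstacle is uniformity: I must produce a single threshold $N$ that works for all essential curves simultaneously, i.e.\ I must prevent ever-longer essential simple closed geodesics from hiding in complementary regions of $\G_1(P,n)$ as $n$ grows. I would argue by contradiction: if some essential non-peripheral simple closed geodesic $\delta_n$ were disjoint from $\G_1(P,n)$ for arbitrarily large $n$, then, using the uniform Hausdorff bound together with the coarse-density inequality $i(\A,\delta)\gtrsim l_X(\delta)$ valid for the filling curve $\A$, one forces $l_X(\delta_n)$ to remain bounded; by discreteness of the length spectrum \cite[Theorem 1.6.11]{Bu} only finitely many isotopy classes $\delta$ can then occur, and for each of them the displayed growth gives $i(\G_1(P,n),\delta)\to\infty$, contradicting disjointness. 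Equivalently, one may bookkeep with the Euler-characteristic identity $\chi(F)=-s_n+\sum_i\chi(R_i)$ over the complementary regions $R_i$ of $\G_1(P,n)$ (with $s_n$ its self-intersection number) to show that no region of negative Euler characteristic can survive. The statement for $(\A^g)^n\A=\G_2(P,n)$ follows verbatim after interchanging the roles of $\A$ and $\A^g$. I also record the cleaner heuristic behind the Remark: writing $\A=\iota_*(xy)$ for the immersed pair of pants $\iota\colon P\to F$ of the figure-eight, one has $\G_1(P,n)=\iota_*(x^ny)$, and since $x^ny$ fills $P$ while $\iota(P)$ already fills $F$, the curve $\iota_*(x^ny)$ fills $F$; making this immersion step rigorous is precisely the uniformity difficulty above.
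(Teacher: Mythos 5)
Your overall strategy --- reduce ``filling'' to ``every essential simple closed geodesic $\delta$ meets the new geodesic,'' exploit the fact that $\g_1(p,n)$ contains the segment $I_1$ covering $\A$ a total of $n$ times, and transfer crossings from $I_1$ to the axis of $\A^n\A^g$ by convexity --- is the same as the paper's. The paper implements the transfer more economically: it picks one lift $z_i$ of $z$ per period of the broken geodesic, observes these are pairwise distinct and disjoint, and shows that if none of them crossed the axis then convexity of geodesic triangles would force all of them through a single short segment $[p,q]$, which can meet at most $i(z,\A)$ lifts of $z$; this gives a contradiction as soon as $n>2i(z,\A)$, with no Morse lemma and no angle estimates.

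The genuine gap in your write-up is the uniformity step, which you correctly identify as the crux but do not actually close. The claim that disjointness of $\delta_n$ from the $(R_0+D)$-dense geodesic $\G_1(P,n)$ ``forces $l_X(\delta_n)$ to remain bounded'' is false as stated: bounded inradius of a complementary region does not bound the length of an essential geodesic it contains (a long simple closed geodesic has an arbitrarily thin collar, so it can lie in a region of small inradius). The inequality $i(\A,\delta)\gtrsim l_X(\delta)$ points the wrong way for this purpose, and the error term $C$ in your displayed estimate is itself proportional to $i(\A,\delta)$ with a constant depending on the crossing angles of $\delta$ with $\A$, so it cannot be absorbed uniformly without further argument; the Euler-characteristic identity is bookkeeping, not a proof. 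To be fair, the paper's own proof also chooses $N>2i(z,\A)$ and hence produces a threshold depending on $z$, so this delicacy is not fully addressed in the source either; but since your proof explicitly rests on resolving it and the proposed resolution does not work, the argument is incomplete at exactly the point you flagged.
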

\begin{proof}
We show that if $z$ is a simple closed geodesic such that $i(z,\A)\neq 0$ then $i(z,(\A^g)^n\A)\neq 0$. The proof for the case $i(z, \A^n\A^g)\neq 0$ is similar.

\begin{figure}[h]
  \centering
    \includegraphics[trim = 40mm 10mm 40mm 10mm, clip, width=8cm]{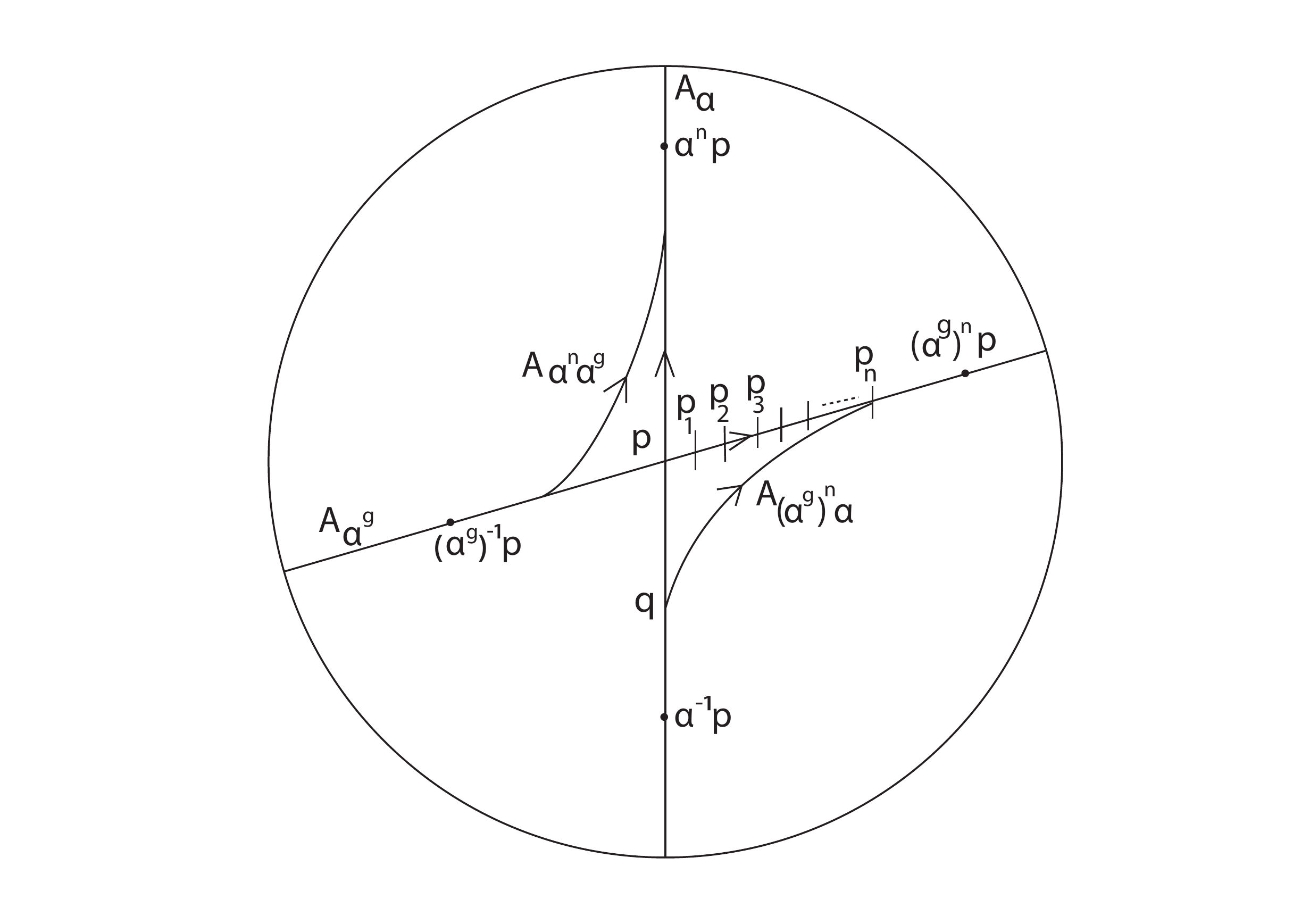}
     \caption{}\label{filling}
\end{figure} 

Consider the Figure \ref{filling}. By the geometry of the products, for each $i$, there exists a lift $z_i$ of the geodesic $z$ intersecting the geodesic segment from $p_{2i}$ to $p_{2i+2}$ and these geodesics are pairwise distinct and disjoint. Also number of geodesic lifts of $z$ that intersect the geodesic segment from $p$ to $q$ is at most $i(z,\A)$. 

Choose $n>N>2i(z,\A)$ such that $N$ is even. If possible, suppose that no lift of $z$ intersects the axis of $(\A^g)^n\A$. Then the convexity of geodesic triangles and uniqueness of geodesic segments implies each $z_i$ intersects the geodesic segment from  $p$  to $q$ and there are atleast $\frac{N}{2}$  of them. As $N>2i(z,\A)$, number of distinct lifts intersecting the geodesic segment from $p$ to $q$ is at least $i(z,\A)+1$ a contradiction. 
\end{proof}

The proof of Lemma \ref{proxymainlemma} follows at once from Lemma \ref{liftequivalence}, Lemma \ref{le1}, Lemma \ref{le2} and Lemma \ref{filling}.


\begin{proof}[Proof of Main theorem]
Consider two intersection points $P_1$ and $P_2$ between $\A$ and $\B$ such that $\la\A*_{P_1}\B\ra=\la\A*_{P_2}\B\ra$. Therefore by Section 3, there exist $g,h\in \pi_1(F)$ such that $\la\A \B^g\ra=\la\A*_{P_1}\B\ra$, $\la\A \B^h\ra=\la\A*_{P_2}\B\ra$   and  they are equal to each other. Using the same proof as in Lemma \ref{le1} we have, for any hyperbolic metric $X$ on $F$, $l_X(\A^n\B^g)=l_X(\A^n\B^h)$ for all $n\in \mathbb{N}$. Now we use the same proof as the in Lemma \ref{le2}. In {\bf{\underline{Case-I}}} if $\A^n\B^g$ is conjugate to $\A^n\B^h$ then from the proof we have $P_1=P_2$ 
(because after a translation their vertices coincides) which gives a contradiction. The proof of {\bf{\underline{Case-II}}} is exactly the same.
\end{proof}

\end{document}